\documentclass[11pt, a4paper, reqno, oneside]{article}

\usepackage{amssymb,amsmath}
\DeclareUnicodeCharacter{0335}{}
\usepackage{tikz-cd}
\usepackage{authblk} 
\usepackage{physics} 
\usepackage{tikz}
\usepackage{titlesec} 
\usetikzlibrary{babel}
\usepackage{adjustbox}
\usepackage{url}
\usepackage{xcolor}
\usepackage{yhmath}
\usepackage{eso-pic}
\usepackage[T1]{fontenc}
\usepackage[utf8]{inputenc}
\usepackage[spanish,english]{babel}
\usepackage{csquotes}
\usepackage{latexsym}
\usepackage{euscript}
\usepackage{mathtools}
\usepackage[all]{xy}
\usepackage{mathrsfs}
\usepackage[percent]{overpic}
\usepackage{setspace}
\usepackage{graphicx}
\usepackage{amsthm}
\usepackage{yfonts}
\usetikzlibrary{calc}
\usetikzlibrary{decorations.pathmorphing}

\tikzset{curve/.style={settings={#1},to path={(\tikztostart)
    .. controls ($(\tikztostart)!\pv{pos}!(\tikztotarget)!\pv{height}!270:(\tikztotarget)$)
    and ($(\tikztostart)!1-\pv{pos}!(\tikztotarget)!\pv{height}!270:(\tikztotarget)$)
    .. (\tikztotarget)\tikztonodes}},
    settings/.code={\tikzset{quiver/.cd,#1}
        \def\pv##1{\pgfkeysvalueof{/tikz/quiver/##1}}},
    quiver/.cd,pos/.initial=0.35,height/.initial=0}

\tikzset{tail reversed/.code={\pgfsetarrowsstart{tikzcd to}}}
\tikzset{2tail/.code={\pgfsetarrowsstart{Implies[reversed]}}}
\tikzset{2tail reversed/.code={\pgfsetarrowsstart{Implies}}}
\tikzset{no body/.style={/tikz/dash pattern=on 0 off 1mm}}

\usepackage{hyperref}
\usepackage{orcidlink}
\ExplSyntaxOn
\tl_new:N \l_main_bookmark_tl
\tl_const:Nx \c_main_subscript_tl { \char_generate:nn {95} {8} }
\cs_new_eq:NN \main_pdfstringdef:Nn \pdfstringdef
\cs_set_protected:Npn \pdfstringdef #1#2
  {
    \group_begin:
      \tl_set:Nn \l_main_bookmark_tl {#2}
      \tl_replace_all:Nnn \l_main_bookmark_tl {$} {}
      \tl_replace_all:Nnn \l_main_bookmark_tl {^} {\textasciicircum}
      \tl_replace_all:NVn \l_main_bookmark_tl \c_main_subscript_tl {\textunderscore}
      \exp_args:NNV \main_pdfstringdef:Nn #1 \l_main_bookmark_tl
    \group_end:
  }
\ExplSyntaxOff
\normalbaselines
\makeatletter

\def\ps@myfancy{\let\@mkboth\markboth
 \def\@evenhead{\vbox{\hsize\textwidth 
 \hbox to \textwidth{\sf\mdseries\thepage 
 \rule[-.6ex]{0mm}{2mm} \hfill\sf\large\leftmark}
 \vskip 1pt \hrule}}
 \def\@oddhead{\vbox{\hsize\textwidth 
 \hbox to \textwidth{{\sf\large\leftmark}
 \rule[-.6ex]{0mm}{2mm} \hfill\sf\mdseries{\thepage}}
 \vskip 1pt \hrule}}}

\def\ps@myfancyplain{
 \def\@evenhead{\vbox{\hsize\textwidth%
 \rule[-.6ex]{0mm}{2mm} \hfill }
 \vskip 1pt \hrule
 \vskip\headsep
 \vskip\textheight
 \vskip1pc
 \hbox to \textwidth{\sf\mdseries\thepage 
 \rule[-6ex]{0mm}{2mm} \hfill }}
 \def\@oddhead{\vbox{\hsize\textwidth 
 \vskip 1pt\hrule
 \vskip\headsep
 \vskip\textheight
 \vskip2pc
 \hbox to \textwidth{\hfill\rule[.4ex]{1pc}{2.5pt}
 \sf\mdseries\thepage}
}}}

\def\ps@myemptyfun{%
 \let\@oddhead\@empty
 \let\@evenhead\@empty
 \def\@oddfoot{\hfil\thepage\hfil}%
 \let\@evenfoot\@oddfoot}

\makeatother
\providecommand{\proofname}{Demostraci\'on.}
 {\par\noindent{\it Demostraci\'on. }\nopagebreak\normalsize}%
 {}

 {\par\noindent{\it #1. }\nopagebreak\normalsize}%
 {\hfill\linebreak[2]\hspace*{\fill}$\square$\\[-1pt]}
\makeatother

\def\sqbullet{\raise.2ex\hbox{\vrule width 3.5pt height 3.5pt}}

{}

{\em}

\newcounter{substep}
\def\thesubstep{\arabic{substep}}

{\em}

\newcounter{subsubstep}
\def\thesubsubstep{\arabic{subsubstep}}

{\em}

\numberwithin{figure}{section}

{}

\newtheoremstyle{mystyle}
  {}
  {}
  {\itshape}
  {}
  {\sf \bfseries}
  {}
{ }
  {\thmname{#1}\thmnumber{{\textcolor{blue}{\, \hspace{-1mm}#2.}}}\thmnote{ (#3)}}

\theoremstyle{mystyle}
\definecolor{royalblue(web)}{rgb}{0.25, 0.41, 0.88}
\hypersetup{
    colorlinks=true,
    linktocpage=true,
    citecolor=black,
    linkcolor=royalblue(web),
    filecolor=magenta,      
    urlcolor=cyan,
    pdftitle={Overleaf Example},
    pdfpagemode=FullScreen,
    }

\titleformat{\section}
  {\normalfont\LARGE\bfseries}{\thesection.}{1em}{}
\titleformat{\subsection}
  {\normalfont\Large\bfseries}{\thesubsection.}{1em}{}
\titleformat{\subsubsection}
  {\normalfont\normalsize\itshape}{\thesubsubsection.}{1em}{}

\newtheorem{Teor}{Theorem}[section]

\newtheorem{Coro}[Teor]{Corollary}

\newcommand{\R}{{\mathbb R}}
 \newcommand{\C}{{\mathbb C}}

\newcommand{\mail}[1]{\small\href{mailto:#1}{#1}}

\newenvironment{Abstract}
{
\begin{center}
\textbf{Abstract}\\
\vspace{0.25cm}
\begin{minipage}{14.5cm}}
{\footnotesize
\end{minipage}
\end{center}}
\begin{document}


	\begin{center}
		{\huge {\bfseries A short note about the spaces $BV^s(\R^n)$\par}}
		\vspace{1cm}
		
\begin{center} 
	 {\Large Guillermo García-Sáez}{\small\textsuperscript{1}} \\
	\mail{guillermo.garciasaez@uclm.es}\:\orcidlink{0009-0008-4335-119X}
\end{center}
\vspace{5mm}

\textsc{\textsuperscript{1}ETSII, Departamento de Matem\'aticas\\ Universidad de Castilla-La Mancha} \\
		Campus Universitario s/n, 13071 Ciudad Real, Spain. \\ \vspace{5mm}
\end{center}
\begin{Abstract}
In this short communication we answer negatively to a question posed by the authors in their work \cite[Section 1.4]{Brue2022} about the complex interpolation nature of the spaces $BV^{s}(\R^n)$, introduced by G.E. Comi and G. Stefani in \cite{ComiStefani2019}.
\end{Abstract}

\tableofcontents

\section{Introduction}
In \cite{ComiStefani2019} it was introduced the \textit{space of functions with bounded fractional variation} $BV^s(\R^n)$ as $$BV^s(\R^n):=\{f\in L^1(\R^n): [f]_{BV^s}<+\infty\},$$ where the fractional variation is defined as $$[f]_{BV^s}:=\operatorname{sup}\Bigg\{\int_{\R^n}f\operatorname{div}^s\varphi: \varphi\in C_c^\infty(\R^n;\R^n), \norm{\varphi}_\infty\leq 1\Bigg\}.$$ Those spaces are introduced as the fractional counterpart of the classical space of functions with bounded variation $BV(\R^n)$, which is defined analogously for the classical gradient $\nabla$. 
The works \cite{Brue2022, ComiStefani2019, ComiStefani2023} are devoted to the study of these fractional spaces. In particular, in \cite[Theorem 4.2]{Brue2022}, it is obtained that
for every $f\in \mathcal{H}^1(\R^n)\cap BV^s(\R^n)$, we have that \begin{equation}\label{eqn:ineq}[f]_{BV^t}\leq C(n,s)\norm{f}_{\mathcal{H}^1}^{\frac{s-t}{s}}[f]_{BV^s}^{t/s},\,0\leq t<s\leq 1,\end{equation} where $\mathcal{H}^1(\R^n)$ is the real Hardy space $$\mathcal{H}^1(\R^n):=\{f\in L^1(\R^n):\mathcal{R}f\in L^1(\R^n;\R^n)\},$$ with $$\mathcal{R}_jf(x):=\pi^{\frac{-(n+1)}{2}}\Gamma\left(\frac{n+1}{2}\right)\lim_{\varepsilon\to 0^+}\int_{B_\varepsilon^c}\frac{(y_j-x_j)f(x)}{|y-x|^{n+1}}\,dy,\,x\in\R^n,\,j=1,\ldots,n,$$ being the Riesz transform.

This type of inequalities are known as \textit{interpolation inequalities}, and usually they can be obtained by means of interpolation methods such as the Calder\'on's complex interpolation method. In fact, in \cite[Section 1.4]{Brue2022} it is posed if the space $BV^s(\R^n)$ is a complex interpolation space between $\mathcal{H}^1(\R^n)$ and $BV(\R^n)$, and hence if one can obtain the inequality \ref{eqn:ineq} by means of abstract interpolation. This question is very natural if one looks at the interpolation inequalities obtained in \cite{Brue2022} and many other embeddings supporting the question that the authors proved (see \cite[Section 1.4]{Brue2022}). Moreover, we already know that for $1<p<\infty$, the space of $p$-summable functions with $p$-summable fractional gradients (also in more general settings as weighted $L^p$ spaces or Musielak-Orlicz spaces) are a complex interpolation spaces, the Bessel potential spaces $H^{s,p}(\R^n)$ (see \cite{BellidoGarcia2025, BellidoCuetoGarcia2025, CamposGarciaSaez2026, Garcia2025}), so the Riesz fractional gradient is in fact a nonlocal operator deeply linked with the complex interpolation method. 
\section{Preliminaries}
We briefly summarize the tools that we will require for our result.
\subsection{Complex interpolation}
In this subsection we briefly briefly introduce Interpolation Theory, including the concepts and results we will use in the following. For a complete development of the main ideas, we refer to \cite{BerghLofstrom1976,GarciaSaez2024,Lunardi2018,Triebel1995}.

Let $(E_0,E_1)$ a couple of Banach spaces. We say that the couple is \textit{compatible} if there exists a Hausdorff topological vector space $\mathcal{E}$ such that $E_0,E_1\xhookrightarrow{}\mathcal{E}$. We say that a Banach space $E$ is intermediate with respect to the couple if $E_0\cap E_1\xhookrightarrow{}E\xhookrightarrow{}E_0+E_1$. Let $(F_0,F_1)$ another compatible couple and $T:E_0+E_1\to F_0+F_1$ bounded a linear. We say that $T$ is \textit{admissible} if $T:E_j\to F_j$, $j=0,1,$ continuously. Given $E$ an intermediate space with respect to the couple $(E_0,E_1)$, and $F$ an intermediate space with respect to $(F_0,F_1)$, we say that $E,F$ are \textit{interpolation spaces} with respect to the couples $(E_0,E_1)$ and $(F_0,F_1)$, respectively, if for every admissible operator $T:E_0+E_1\to F_0+F_1$, $T:E\to F$ continuously. The methods to construct such interpolation spaces for given couples are called \textit{interpolation functors.} We refer to \cite{BerghLofstrom1976,GarciaSaez2024,Lunardi2018,Triebel1995} for detailed expositions on the interpolation theory.

We say that an interpolation functor $\mathcal{F}$ is of \textit{exponent} $\theta\in (0,1)$ if $$\norm{T}_{\mathcal{F}\left((E_0,E_1)\right)\to \mathcal{F}\left((F_0,F_1)\right)}\leq C\norm{T}_{E_0\to F_0}^{1-\theta}\norm{T}_{E_1\to F_1}^\theta,$$ for some positive constant $C$. If we can choose $C=1$, we say that the functor is \textit{exact} of exponent $\theta$.

For our purposes, we will focus on the complex interpolation method. Given a compatible couple of Banach spaces $(E_0,E_1)$, we define the space $\mathfrak{F}(E_0,E_1)$, as the space of functions $f:S\to E_0+E_1$, where $S:=\{z\in \C: 0\leq \operatorname{Re}z\leq 1\}$, such that $f$ is holomorphic on the interior of $S$, continuous and bounded on $S$, and the functions $t\mapsto f(j+it)$, $j=0,1$, are continuous from $\R\to E_j$, and such that $\norm{f(j+it)}_{E_j}\to 0$ as $|t|\to \infty$. The space $\mathfrak{F}(E_0,E_1)$ is a vector space which is complete endowed with the norm $$\norm{f}_{\mathfrak{F}(\overline{E})}:=\operatorname{max}\{\operatorname{sup}_{t\in\R}\norm{f(it)}_{E_0},\operatorname{sup}_{t\in\R}\norm{f(1+it)}_{E_1}\},\,f\in \mathfrak{F}(\overline{E}).$$ From this space, we construct the \textit{complex method} as the functor $\mathcal{C}_\theta$, $\theta\in [0,1]$ which associates the space $[E_0,E_1]_\theta$ to the compatible couple of Banach spaces $(E_0,E_1)$. The space $[E_0,E_1]_\theta$ is defined as the space of $x\in E_0+E_1$ such that there exists $f\in \mathfrak{F}(E_0,E_1)$ with $f(\theta)=x$. The space is a Banach space endowed with the norm $$\norm{x}_{\theta}:=\operatorname{inf}\{\norm{f}_{\mathfrak{F}(\overline{E})}: f\in \mathfrak{F}(\overline{E}), f(\theta)=x\}.$$ 

Now we enumerate the method's properties we will require
\begin{Teor}[Properties of Complex interpolation spaces]\label{ComplexProps}
        Let $(E_0,E_1)$ a compatible couple of Banach spaces, and $\theta\in [0,1]$. Then, we have
        \begin{enumerate}
        \item[1.-] The spaces $[E_0,E_1]_\theta$ are exact interpolation spaces of exponent $\theta$
            \item[2.-] $E_0\cap E_1$ is dense in $[E_0,E_1]_\theta$.
            \item[3.-] There exists a positive constant $C>0$ such that for every $u\in E_0\cap E_1$, $$\norm{u}_{[E_0,E_1]_\theta}\leq C\norm{u}_{E_0}^{1-\theta}\norm{u}_{E_1}^\theta.$$
         \end{enumerate}
\end{Teor}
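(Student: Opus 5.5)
We prove the three items directly from the definition of $\mathfrak{F}(\overline{E})$ and $[E_0,E_1]_\theta$, using only holomorphy on the strip $S$ and the three lines theorem. Throughout we take $\theta\in(0,1)$; the endpoint cases need separate, routine bookkeeping.

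\textbf{Item 1.} Let $T$ be admissible from $(E_0,E_1)$ to $(F_0,F_1)$, and let $x\in[E_0,E_1]_\theta$. Fix $f\in\mathfrak{F}(\overline{E})$ with $f(\theta)=x$, and write $M_j=\norm{T}_{E_j\to F_j}$, assuming $M_0,M_1>0$ (otherwise perturb them by $\varepsilon>0$). Define
$$g(z):=M_0^{z-1}M_1^{-z}\,Tf(z).$$
Then $g\in\mathfrak{F}(\overline{F})$:
\begin{itemize}
\item holomorphy and boundedness follow because $T$ is bounded $E_0+E_1\to F_0+F_1$ and the scalar factor is entire and bounded on $S$;
\item the boundary continuity and decay follow because $T:E_j\to F_j$ is continuous.
\end{itemize}
On the boundary, $\norm{g(j+it)}_{F_j}\le \norm{f}_{\mathfrak{F}}$. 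Since $g(\theta)=M_0^{\theta-1}M_1^{-\theta}Tx$, we get
$$\norm{Tx}_\theta\le M_0^{1-\theta}M_1^\theta\norm{f}_{\mathfrak{F}}.$$
Taking the infimum over $f$ gives exactness of exponent $\theta$. That $[E_0,E_1]_\theta$ is intermediate is also needed:
\begin{itemize}
\item For $x\in E_0\cap E_1$, the function $f(z)=e^{\delta(z-\theta)^2}x$ with $\delta>0$ belongs to $\mathfrak{F}$ and has $f(\theta)=x$. This gives $E_0\cap E_1\hookrightarrow[E_0,E_1]_\theta$.
\item For the embedding into $E_0+E_1$, we bound $\norm{f(\theta)}_{E_0+E_1}$ by the boundary values. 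Apply the three lines lemma to $\langle f(z),\phi\rangle$ for $\phi\in(E_0+E_1)^*$, and use that $\norm{\cdot}_{E_0+E_1}\le\norm{\cdot}_{E_j}$.
\end{itemize}

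\textbf{Item 3.} This follows from the same Gaussian trick with scaling. Given $u\in E_0\cap E_1$, set $a=\norm{u}_{E_0}$, $b=\norm{u}_{E_1}$ and
$$f(z)=e^{\delta(z-\theta)^2}a^{z-\theta}b^{\theta-z}\,u.$$
Then $f(\theta)=u$. On $\operatorname{Re}z=0$ we have $|a^{-\theta}b^{\theta}|\,a=a^{1-\theta}b^\theta$, and the analogous bound holds on $\operatorname{Re}z=1$. The Gaussian factor is bounded by $e^{\delta}$ there and provides the required decay as $|t|\to\infty$. Hence $\norm{u}_\theta\le e^{\delta}a^{1-\theta}b^\theta$, so one may take $C=e^\delta$ (and letting $\delta\to0$ gives $C=1$).

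\textbf{Item 2 (main obstacle).} Density is the delicate step. Given $x=f(\theta)$ with $f\in\mathfrak{F}$, we need $f_k\in\mathfrak{F}$ with $f_k(\theta)\in E_0\cap E_1$ and $\norm{f-f_k}_{\mathfrak{F}}\to0$. The plan follows the standard Bergh–Löfström approach:
\begin{itemize}
\item First replace $f$ by $e^{\delta(z-\theta)^2}f$, which approximates $f$ in $\mathfrak{F}$ as $\delta\to0$ using the boundary decay.
\item Then approximate by functions of the form $\sum_k e^{\delta z^2+\lambda_k z}x_k$ with $x_k\in E_0\cap E_1$. This uses a Fourier series expansion in the periodized variable: the Poisson integral expresses $f$ through its boundary values, and Fejér means of the periodization converge uniformly on the boundary lines in $E_j$.
\item The coefficients $x_k$ of such expansions are integrals of $f$ against exponentials over a period. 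They lie in $E_0\cap E_1$ because $f$ is holomorphic: shifting the contour from $\operatorname{Re}z=0$ to $\operatorname{Re}z=1$ shows that the same vector lies in both spaces.
\end{itemize}
The key technical point I expect to take the most work is controlling the uniform convergence on both boundary lines simultaneously.
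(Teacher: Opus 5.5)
Your proposal is correct and is the standard Bergh--L\"ofstr\"om argument: the twisted function $M_0^{z-1}M_1^{-z}Tf(z)$ gives exactness, the scaled Gaussian $e^{\delta(z-\theta)^2}a^{z-\theta}b^{\theta-z}u$ gives item 3 (even with $C=1$), and periodization, Fej\'er means and Cauchy's theorem placing the coefficients in $E_0\cap E_1$ give density; this is exactly what the paper relies on, since it proves nothing here and only cites Bergh--L\"ofstr\"om. One remark on item 2: uniform convergence on both lines is automatic, because the same coefficients serve both lines, so the real bookkeeping is passing from the periodization back to $f$ uniformly in $t\in\R$, which needs a Gaussian damping factor kept in the approximants and the Gaussian parameter, then the period, then the Fej\'er index fixed in that order.
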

Detailed proofs of these facts can be found in \cite[Theorem 4.1.2, Theorem~4.2.1, Theorem~4.2.2, Theorem~4.5.1]{BerghLofstrom1976} and \cite[Theorem, IV.1.5, Proposition~IV.1.8, Theorem~IV.5.4, Theorem~IV.5.6]{GarciaSaez2024}.
\subsection{Functions of Bounded Fractional Variation}
Since we will be dealing with weighted fractional Sobolev spaces, we recall the basic definitions of the fractional operators introduced by Shieh and Spector in \cite{ShiehSpector2015,ShiehSpector2018}. Let $u\in C_c^\infty(\R^n)$, $v\in C_c^\infty(\R^n;\R^n)$ and $s\in (0,1)$. We define respectively the \textit{Riesz fractional gradient} $\nabla^s$ and the \textit{fractional divergence} as $$\nabla^su(x)=c_{n,s}\int_{\R^n}\frac{u(x)-u(y)}{|x-y|^{n+s}}\frac{x-y}{|x-y}\,dy,\,x\in \R^n,\,\operatorname{div}^sv(x)=c_{n,s}\int_{\R^n}\frac{v(x)-v(y)}{|x-y|^{n+s}}\cdot\frac{x-y}{|x-y}\,dy,\,x\in \R^n,$$ where $c_{n,s}$ is a normalizing constant. One of the main properties of these objects is that they can be seen as the classical gradient and divergence of the Riesz potential $$I_su:=I_{s}*u(x),\,I_s(x):=\frac{1}{\gamma_{n,s}}\frac{1}{|x|^{n-s}},\,0<s<n.$$  of the functions $u$ and $v$, i.e., $$\nabla^s u=\nabla(I_su)=I_s(\nabla u),\, \operatorname{div}^sv=\operatorname{div}(I_s v)=I_s(\operatorname{div}v).$$
See \cite{ShiehSpector2015, ShiehSpector2018} for the main properties of those operators.

We define \textit{space of functions with bounded fractional variation} $BV^s(\R^n)$ as $$BV^s(\R^n):=\{f\in L^1(\R^n): [f]_{BV^s}<+\infty\},$$ where the fractional variation is defined as $$[f]_{BV^s}:=\operatorname{sup}\Bigg\{\int_{\R^n}f\operatorname{div}^s\varphi: \varphi\in C_c^\infty(\R^n;\R^n), \norm{\varphi}_\infty\leq 1\Bigg\}.$$ See \cite{Brue2022, ComiStefani2019, ComiStefani2023} for its main properties. In particular we are interested in the fact that the compactly supported smooth functions $C_c^\infty(\R^n)$ are dense in $BV^s(\R^n)$ \cite[Theorem 3.8]{ComiStefani2019}.

\section{The spaces $[\mathcal{H}^1(\R^n), BV(\R^n)]_s$ and $BV^s(\R^n)$}
Motivated by the interpolation inequality \ref{eqn:ineq}, the authors in \cite{Brue2022} ask if the space $BV^s(\R^n)$ is related to the complex interpolation space $[\mathcal{H}^1(\R^n),BV(\R^n)]_s$, in particular if they are equal of at least if $BV^s(\R^n)\xhookrightarrow{}[\mathcal{H}^1(\R^n),BV(\R^n)]_s$, because that would imply by Theorem \ref{ComplexProps}(iii) the inequality \ref{eqn:ineq}. In this section we answer negatively to this fact by a simple but not that trivial observation.

To show that these two spaces are not equal, we will show that every function on $X_s:=[\mathcal{H}^1(\R^n), BV(\R^n)]_s$ integrates zero in the whole space, which is in contradiction with the fact that $C_c^\infty(\R^n)$ functions are dense in the second space. 

First of all we recall that every function $f\in \mathcal{H}^1(\R^n)$ integrates zero, i.e., \begin{equation}\label{eqn:h1}\int_{\R^n}f(x)\,dx=0,\,\forall f\in \mathcal{H}^1(\R^n),\end{equation} as it is proved in \cite[Corollary 6.7.7]{Grafakos2}. Although this property does not hold for $BV$-functions, it is carried out by the complex interpolation method for the whole space $X_s$.
\begin{Teor}
    Let $0<s<1$. Then, for every $f\in X_s$, we have that $$\int_{\R^n}f(x)\,dx=0.$$
\end{Teor}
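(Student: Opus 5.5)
We prove that the linear functional $\Lambda(f):=\int_{\R^n}f\,dx$, which is bounded on $L^1(\R^n)$, vanishes on $X_s$. The strategy uses density (Theorem \ref{ComplexProps}, item 2), which reduces the claim to $\mathcal{H}^1(\R^n)\cap BV(\R^n)$, where \eqref{eqn:h1} applies. The argument needs two things: that the intersection is dense in $X_s$, and that $\Lambda$ is continuous on $X_s$.

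\textbf{Continuity of $\Lambda$ on $X_s$.} Both $\mathcal{H}^1(\R^n)$ and $BV(\R^n)$ embed continuously into $L^1(\R^n)$, since $\norm{f}_{L^1}\leq\norm{f}_{\mathcal{H}^1}$ and $\norm{f}_{L^1}\leq \norm{f}_{BV}$. Hence $\mathcal{H}^1+BV\xhookrightarrow{}L^1$ with
$$\norm{f}_{L^1}\leq \inf_{f=g+h}\big(\norm{g}_{L^1}+\norm{h}_{L^1}\big)\leq \norm{f}_{\mathcal{H}^1+BV}.$$
Since $[E_0,E_1]_s\xhookrightarrow{}E_0+E_1$ always holds, we obtain $X_s\xhookrightarrow{}L^1(\R^n)$, and therefore $|\Lambda(f)|\leq C\norm{f}_{X_s}$. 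An alternative, more direct route to the same bound: for $F\in\mathfrak{F}$ with $F(s)=f$, the function $z\mapsto \Lambda(F(z))$ is bounded and holomorphic on the strip, so the three lines lemma gives $|\Lambda(f)|\leq \norm{F}_{\mathfrak{F}}$.

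\textbf{Conclusion.} Let $f\in X_s$. By Theorem \ref{ComplexProps} (2), there are $f_k\in\mathcal{H}^1(\R^n)\cap BV(\R^n)$ with $f_k\to f$ in $X_s$. Each $f_k$ lies in $\mathcal{H}^1$, so $\Lambda(f_k)=0$ by \eqref{eqn:h1}. By continuity, $\Lambda(f)=\lim_k\Lambda(f_k)=0$.

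An even simpler route avoids density altogether. Take $F\in\mathfrak{F}$ with $F(s)=f$ and set $\phi(z)=\Lambda(F(z))$. On the line $\operatorname{Re}z=0$ we have $F(it)\in\mathcal{H}^1$, so $\phi(it)=0$ for every $t\in\R$. Since $\phi$ is bounded, continuous on $S$ and holomorphic in its interior, the Schwarz reflection principle (or boundary uniqueness for bounded holomorphic functions) forces $\phi\equiv0$; in particular $\phi(s)=\Lambda(f)=0$.

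\textbf{Main obstacle.} The only delicate point is making sure the ambient embeddings are legitimate, namely that $\Lambda$ is well defined and continuous on $\mathcal{H}^1+BV$. This follows from the $L^1$ bounds above. For the density route one must also note that Theorem \ref{ComplexProps} (2) applies for $0<s<1$, which it does.
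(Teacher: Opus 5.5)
Your proof is correct in all three variants. However, your main argument is not the one the paper writes out.

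The paper's proof is essentially your last paragraph. It fixes $F\in\mathfrak{F}(\mathcal{H}^1,BV)$ with $F(s)=f$ and sets $\Phi=l\circ F$. It notes $\Phi(it)=0$ and $|\Phi(1+it)|\le \sup_t\norm{F(1+it)}_{BV}=:M$. It then applies Hadamard's three lines theorem to get $|\Phi(s)|\le 0^{1-s}M^{s}=0$, where you use Schwarz reflection and the identity theorem instead. Your density route (continuity of $\Lambda$ via $X_s\hookrightarrow\mathcal{H}^1+BV\hookrightarrow L^1$, plus density of $\mathcal{H}^1\cap BV$) appears in the paper only as a one-line remark after the proof.

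\emph{Density route.} It is the shortest to state, but it rests on two further facts:
\begin{itemize}
\item the nontrivial density theorem for $[E_0,E_1]_\theta$, proved by approximating $F$ with functions $e^{\delta z^2}\sum_k e^{\lambda_k z}a_k$, $a_k\in E_0\cap E_1$;
\item the embedding $[E_0,E_1]_\theta\hookrightarrow E_0+E_1$, which is itself a three-lines estimate.
\end{itemize}
So it is really the direct argument with extra machinery on top.

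\emph{Direct routes.} These use only scalar complex analysis. The three-lines version needs $\Phi$ bounded on the whole strip, which holds since $F$ is bounded in $\mathcal{H}^1+BV$. It uses $s<1$ through the factor $0^{1-s}$. Your reflection version needs only continuity of $\Phi$ up to $\operatorname{Re}z=0$. It also gives a stronger conclusion: $\Phi\equiv 0$ on all of $S$, so every $F(z)$, $z\in S$, has zero integral.
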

\noindent{}\textbf{Proof:}
Let us define the linear operator $$l(f):=\int_{\R^n}f(x)\,dx,$$ for functions $f\in \mathcal{H}^1(\R^n)+BV(\R^n)$. This operator is well defined and it is continuous since $$|l(f)|\leq \norm{f}_1\leq \norm{f}_{\mathcal{H}^1+BV},$$ since both $\mathcal{H}^1(\R^n)$ and $BV(\R^n)$ are proper subspaces of $L^1(\R^n)$. Now, fixed $f\in X_s$, there must exist a function $F\in \mathfrak{F}\left(\mathcal{H}^1,BV\right)$ such that $F(s)=f$, with $$F(it)\in\mathcal{H}^1(\R^n),\,F(1+it)\in BV(\R^n),\,t\in\R.$$
Now, we define the operator $\Phi(z):=l(F(z))$, for $z$ in $\overline{S}$, which is holomorphic on the interior of $S$, continuous on $S$ and bounded on the boundary of the strip.  In fact, $$\Phi(it)=l(F(it))=\int_{\R^n}F(it)\,dx=0,\,t\in\R,$$ since $F(it)\in \mathcal{H}^1(\R^n)$, and $$|\Phi(1+it)|\leq \norm{F(1+it)}_1\leq \norm{F(1+it)}_{BV},$$ so there exists a positive constant $M$ such that $|\Phi(1+it)|\leq M$ for every $t\in \R$. Hence, by the Hadamard's three line theorem, we have that $\Phi(s)=0$, and hence $\int_{\R^n}f(x)\,dx=0$, as we wanted to prove. \qed 

Since by Theorem \ref{ComplexProps}(ii),  $\mathcal{H}^1\cap BV$ is dense on $X_s$, we can easily derive as well the last result in an easier fashion. 

Since by \cite[Theorem 3.8]{ComiStefani2019}, we have the density of $C_c^\infty(\R^n)$ on $BV^s(\R^n)$, we can choose a function $\varphi\in C_c^\infty(\R^n)$ such that $\varphi\geq 0$, $\varphi\not =0$, and hence $\int_{\R^n}\varphi>0$. By the previous result, the conclusion follows directly:
\begin{Coro}
    For every $s\in (0,1)$, we have that $BV^s(\R^n)\not\subset X_s$.
\end{Coro}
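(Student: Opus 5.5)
The plan is to exhibit one explicit element of $BV^s(\R^n)$ that cannot lie in $X_s=[\mathcal{H}^1(\R^n),BV(\R^n)]_s$, using the previous theorem: every $f\in X_s$ has $\int_{\R^n}f=0$. It therefore suffices to produce some $\varphi\in BV^s(\R^n)$ with $\int_{\R^n}\varphi\neq 0$.

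I would take any nonnegative $\varphi\in C_c^\infty(\R^n)$ with $\varphi\not\equiv 0$, for instance a standard bump function, so that $\int_{\R^n}\varphi>0$. The first step is to check that $\varphi\in BV^s(\R^n)$.

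\begin{itemize}
\item Clearly $\varphi\in L^1(\R^n)$.
\item For the seminorm, use the duality (fractional integration by parts) $\int\varphi\,\operatorname{div}^s\psi=-\int\nabla^s\varphi\cdot\psi$ for $\psi\in C_c^\infty(\R^n;\R^n)$, which follows from the representation $\nabla^s=\nabla I_s$, $\operatorname{div}^s=\operatorname{div}I_s$ and the symmetry of the Riesz kernel.
\item This gives $[\varphi]_{BV^s}\leq\norm{\nabla^s\varphi}_{L^1}$.
\item $\nabla^s\varphi=I_s(\nabla\varphi)$ is smooth and bounded. Since $\int\nabla\varphi=0$, its decay at infinity is like $|x|^{-(n+1-s)}$, which is integrable. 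Hence $\nabla^s\varphi\in L^1$.
\end{itemize}

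Alternatively, I would simply cite \cite{ComiStefani2019}, where $C_c^\infty(\R^n)\subset BV^s(\R^n)$ is implicit in the density result \cite[Theorem 3.8]{ComiStefani2019}.

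The second step is the contradiction. If $\varphi\in X_s$, the previous theorem would force $\int_{\R^n}\varphi=0$, contradicting $\int_{\R^n}\varphi>0$. Hence $\varphi\in BV^s(\R^n)\setminus X_s$, so $BV^s(\R^n)\not\subset X_s$. As a consequence, neither the equality $BV^s=X_s$ nor the continuous embedding $BV^s\hookrightarrow X_s$ holds.

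The only point requiring any care is the first step, namely verifying $\nabla^s\varphi\in L^1$. The worry is integrability at infinity. I would handle it by the Taylor expansion of $I_s(\nabla\varphi)$ using the vanishing mean of $\nabla\varphi$, or bypass it by citing \cite{ComiStefani2019}. Everything else is immediate from the preceding theorem.
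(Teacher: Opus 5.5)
Your proposal is correct and follows the paper's argument: take a nonnegative, nonzero $\varphi\in C_c^\infty(\R^n)\subset BV^s(\R^n)$, so $\int_{\R^n}\varphi>0$, and the preceding theorem rules out $\varphi\in X_s$. One small slip in your optional verification: the correct representation is $\nabla^s=\nabla I_{1-s}=I_{1-s}\nabla$ (the paper's $I_s$ is a misprint), so $|\nabla^s\varphi(x)|$ decays like $|x|^{-(n+s)}$ rather than $|x|^{-(n+1-s)}$, which is still integrable; more simply, the defining integral gives $\norm{\nabla^s\varphi}_{L^1}\le c_{n,s}\iint\frac{|\varphi(x)-\varphi(y)|}{|x-y|^{n+s}}\,dx\,dy<\infty$, with no decay analysis needed.
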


\section*{Acknowledgements}
This work was supported by Agencia Estatal de Investigación (Spain) through grant PID2023-151823NB-I00 and Junta de Comunidades de Castilla-La Mancha (Spain) through grant SBPLY/23/180225/000023. G.G.-S. is supported by a Doctoral Fellowship by \textit{Universidad de Castilla-La Mancha} \text{2024-UNIVERS-12844-404}.

The author would like to thank to his advisor José Carlos Bellido for the useful suggestions
on a preliminary version of the manuscript, and O. Domínguez for several conversations on the subject of this paper.


\section*{Conflicts of interest}

The authors declare that there are no conflicts of interest regarding the publication of this paper.

\addcontentsline{toc}{section}{References}
\bibliographystyle{plain}

\end{document}